\newtheorem{assumption}[theorem]{Assumption}
\newcommand{\eps}{\varepsilon}
\newcommand{\ol}{\overline}
\newcommand{\one}{\mathbf{1}}
\newcommand{\circledOne}{\text{\ding{172}}}
\newcommand{\circledTwo}{\text{\ding{173}}}
\newcommand{\circledThree}{\text{\ding{174}}}
\renewcommand{\hat}{\widehat}
\newcommand{\numberthis}{\addtocounter{equation}{1}\tag{\theequation}}
\DeclareMathOperator*{\argmin}{arg\,min}
\DeclareMathOperator{\prox}{prox}
\DeclareMathOperator{\col}{col}
\newcommand{\R}{\mathbb{R}}
\newcommand{\mI}{{\bf I}}
\newcommand{\mP}{{\bf P}}
\newcommand{\mW}{{\bf W}}
\newcommand{\cE}{{\mathcal{E}}}
\newcommand{\cG}{{\mathcal{G}}}
\newcommand{\cL}{{\mathcal{L}}}
\newcommand{\cV}{{\mathcal{V}}}
\newcommand{\bu}{{\bf u}}
\newcommand{\bv}{{\bf v}}
\newcommand{\bx}{{\bf x}}
\newcommand{\by}{{\bf y}}
\newcommand{\bz}{{\bf z}}
\newcommand{\muav}{\mu_{g}}
\newcommand{\mumin}{\mu_{l}}
\newcommand{\Lav}{L_{g}}
\newcommand{\Lmax}{L_{l}}
\newcommand{\ds}{\displaystyle}
\newcommand{\norm}[1]{\left\| #1 \right\|}
\newcommand{\angles}[1]{\left\langle #1 \right\rangle}
\newcommand{\cbraces}[1]{\left( #1 \right)}
\newcommand{\sbraces}[1]{\left[ #1 \right]}
\def\<#1,#2>{\langle #1,#2\rangle}
\begin{document}

\title{Decentralized Proximal Optimization Method with Consensus Procedure\thanks{The research is supported by the Ministry of Science and Higher Education of the Russian Federation (Goszadaniye) 075-00337-20-03, project No. 0714-2020-0005.}}

\titlerunning{Decentralized Proximal Method}

\author{Alexander Rogozin\inst{1}\orcidID{0000-0003-3435-2680} \and
Anton Novitskii\inst{1}\orcidID{0009-0005-6411-2639} \and
Alexander Gasnikov\inst{1,2,3}\orcidID{0000-0002-7386-039X}}

\authorrunning{A. Rogozin et al.}

\institute{
	Moscow Institute of Physics and Technology, Moscow, Russia \and
	Institute for Information Transportation Problems, Moscow, Russia \and
	Caucasus Mathematic Center of Adygh State University, Moscow, Russia
}

\maketitle              

\begin{abstract}
Decentralized optimization is well studied for smooth unconstrained problems. However, constrained problems or problems with composite terms are an open direction for research. We study structured (or composite) optimization problems, where the functional is a sum of a convex smooth function and a proper convex proximal-friendly term. Our method builds upon an accelerated proximal gradient descent and makes several consensus iterations between computations.

\keywords{convex optimization, distributed optimization, proximal method}
\end{abstract}

\section{Introduction}

Distributed optimization has a wide range of applications. The fields where distributed optimization problems arise include power system control \cite{ram2009distributed,gan2012optimal}, formation control \cite{olshevsky2010efficient,ren2006consensus,jadbabaie2003coordination}, distributed statistical inference and machine learning \cite{rabbat2004distributed,forero2010consensus,nedic2017fast,nedic2020distributed} distributed coordination and control \cite{ren2008distributed}, distributed averaging \cite{cai2014average,olshevsky2014linear,xiao2007distributed}, distributed spectrum sensing \cite{bazerque2009distributed}. Distributed optimization takes place when the data is separated between several computational entities due to a large amount of datasets, privacy constraints or a split structure of the data itself.

In this paper we focus on decentralized systems. Several computational nodes, or agents, locally hold objective functions and can communicate to each other. A centralized aggregator is not present in the network, so the agents directly communicate to each other. The structure of the network is represented as an undirected graph, and the agents exchange information with their immediate neighbors.

The communication network may also change with time, which typically happens due to technical instabilities \cite{rogozin2022decentralized_survey}. A time-varying network corresponds to a changing communication graph.

Informally speaking, the complexity of a decentralized method depends on objectives condition number $\kappa$ and on graph condition number $\chi$. For static networks, optimal methods require $O(\sqrt\kappa\sqrt\chi\log(1/\eps))$ communication rounds to reach $\eps$-accuracy. For optimization over time-varying networks, optimal algorithms require $O(\kappa\sqrt\chi\log(1/\eps))$ communications.

\noindent\textbf{Related work}. Previously in the literature the classes or time-varying and time-static graphs have been studied. For functions with smooth gradients, lower communication and local computation complexity bounds were proposed in \cite{scaman2017optimal}. Optimal dual \cite{scaman2017optimal} and primal \cite{kovalev2020optimal} algorithms were developed, as well. Lower bounds for problems with non-smooth Lipschitz objectives were proposed in \cite{scaman2018optimal} along with methods optimal up to a factor dependent on space dimension. Paper \cite{dvinskikh2019decentralized} proposed an optimal primal scheme based on gradient sliding. A method for problems with composite terms was developed in \cite{ye2020multi}.

There is a group of algorithms that use a consensus subroutine technique. Initially this approach has been proposed in \cite{jakovetic2014fast} for time-static graphs. After that, it was applied to time-varying graphs. Deterministic setup was studied in \cite{rogozin2021towards} and stochastic setup with mini-batching was studied in \cite{rogozin2021accelerated}. The technique has also been applied to saddle-point problems \cite{beznosikov2021near,beznosikov2021distributed_2}. Paper \cite{beznosikov2021distributed_2} supports composite min-max problems, but the analysis requires a bounded constraint set. Our assumptions do not require bounded gradient norm or bounded constraint set.

\noindent\textbf{Our contribution}. We continue a series of works on consensus subroutine, proposing a decentralized proximal method. Our analysis is relatively easy and we do not require bounded gradients or bounded constraint set. Our approach only requires Lipschitz gradient and strong convexity.

\section{Problem Statement}\label{sec:problem_statement}

In this paper, we study a sum-type minimization problem
\begin{align}\label{eq:initial_problem}
	\min_{x\in Q}~ f(x) + g(x) = \frac{1}{m}\sum_{i=1}^m f_i(x) + g(x)
\end{align}
Here functions $f_i(x)$ are convex and smooth, and function $g(x)$ is a proper convex closed function, possibly non-smooth, and $Q$ is a closed convex set. We assume that $g(x)$ is proximal-friendly, i.e. its proximal operator can be easily computed.

\subsection{Notation}

Let $\otimes$ denote the Kronecker product. Let $\bx = \col[x_1, \ldots, x_m] = [x_1^\top, \ldots, x_m^\top]^\top\in\R^{md}$ denote a column vector. Let $\lambda_{\max}(\cdot)$ and $\lambda_{\min}^+(\cdot)$ denote maximum and minimum nonzero eigenvalues of a matrix. Also let $\cL = \{x_1 = \ldots = x_m\}$ denote the consensus constraint set. We denote $\one$ a vector of all ones and introduce a projection operator $\mP = (1/m)\one\one^\top\otimes\mI$ (the dimension is known from the context).

We introduce $\bx = \col[x_1, \ldots, x_m]$ and denote
\begin{align}\label{eq:def_F_G}
	F(\bx) = \sum_{i=1}^m f_i(x_i),~ G(\bx) = \sum_{i=1}^m g_i(x_i).
\end{align}
We also introduce a prox-operator for $g$ w.r.t. set $Q$:
\begin{align*}
	\prox_g^\gamma(x) = \argmin_{y\in Q}\cbraces{g(y) + \frac{1}{2\gamma}\norm{y - x}_2^2}.
\end{align*}
Analogously, a prox-operator for $G$ w.r.t. $Q^m = \{\bx\in\R^{md}:~ x_i\in Q,~ i = 1, \ldots, m\}$ writes as 
\begin{align*}
	\prox_G^\gamma(\bx) = \argmin_{\by\in Q^m}\cbraces{G(\bx) + \frac{1}{2\gamma}\norm{\by - \bx}_2^2} = \col[\prox_g^\gamma(x_1)\ldots \prox_g^\gamma(x_m)].
\end{align*}
Note that $\prox_G^\gamma(\bx)$ is a separable operator, i.e. it can be computed separately for $x_1, \ldots, x_m$.

\subsection{Objective Functions}

Our paper focuses on smooth strongly convex functions.
\begin{assumption}\label{assum:smoothness}
	For each $i = 1, \ldots, m$ function $f_i$ is $L_i$-smooth, i.e. for any $x, y\in\R^d$ it holds
	\begin{align*}
		f_i(y)\leq f_i(x) + \angles{\nabla f_i(x), y - x} + \frac{L_i}{2}\norm{y - x}_2^2.
	\end{align*}
\end{assumption}
\begin{assumption}\label{assum:strong_convexity}
	For each $i = 1, \ldots, m$ function $f_i$ is $\mu_i$-strongly convex, i.e. for any $x, y\in\R^d$ it holds
	\begin{align*}
		f_i(y)\geq f_i(x) + \angles{\nabla f_i(x), y - x} + \frac{\mu_i}{2}\norm{y - x}_2^2.
	\end{align*}
\end{assumption}
We rewrite problem \eqref{eq:initial_problem} as
\begin{align}\label{eq:problem_linear_constraints}
	\min_{\bx\in\R^{md}}~ &F(\bx) + G(\bx) \\
	\text{s.t. } &x_1 = \ldots = x_m \nonumber
\end{align}
Also introduce local and global constants characterizing problem optimization parameters.
\begin{subequations}\label{eq:local_global constants}
	\begin{align}
		\Lmax &= \max_{i = 1, \ldots, m} L_i,~ \mumin = \min_{i = 1, \ldots, m} \mu_i, \\
		\Lav &= \frac{1}{m}\sum_{i=1}^m L_i,~ \muav = \frac{1}{m}\sum_{i=1}^m \mu_i.
	\end{align}
\end{subequations}
It is known that global and local constants may significantly differ \cite{scaman2017optimal,rogozin2022decentralized_survey}.

\subsection{Communication Network}

\begin{assumption}
	
	We assume that nodes are connected via a \textit{time-varying} network represented by a sequence of graphs $\{\cG^k = (\cV, \cE^k)\}_{k=0}^\infty$. The graphs have a common set of vertices $\cV$ but may have different edge sets $\cE^k$. With each of the graphs, we associate a mixing matrix $W^k$.
	
	Mixing matrix sequence $\{W^k\}_{k=1}^\infty$ satisfies the following properties:
	\begin{enumerate}
		\item (Network compatibility) For each $k = 1, 2, \ldots$ it holds $[W^k]_{ij} = 0$ if $(i, j)\notin\cE^k$.
		\item (Double stochasticity) For each $k = 1, 2, \ldots$ it holds $W^k\one = \one,~ \one^\top W^k = \one^\top$.
		\item (Spectral property) There exists $\lambda < 1$ such that for all $k = 1, 2, \ldots$ it holds $\norm{W^k - \dfrac{1}{m}\one\one^\top}_2\leq 1 - \chi^{-1}$.
	\end{enumerate}
\end{assumption}
We also introduce $\mW^k = W^k\otimes\mI$.

\section{Inexact Oracle Framework}\label{sec2}

Let us construct an inexact model for function $h(x) = \frac{1}{m}\sum_{i=1}^m f_i(x) + g(x)$.

\begin{lemma}\label{lem:inexact_oracle}
	Consider $y\in\R^d,~ z\in\R^d,~ \bx = \col[x_1, \ldots, x_m]\in\R^{md}$. Define
	\begin{align*}
		\eta &= \frac{1}{2m}\cbraces{\frac{\Lmax^2}{\Lav} + \frac{2\Lmax^2}{\muav} + \Lmax - \mumin}, \numberthis\label{eq:eta_inexact_oracle} \\
		\delta &= \eta \sum_{i=1}^m\norm{x_i - y}_2^2, \numberthis\label{eq:delta_inexact_oracle} \\
		f_{\delta}(y, \bx) &= \frac{1}{m} \sum_{i=1}^m \sbraces{f_i(x_i) + \angles{\nabla f_i(x_i), y - x_i} + \frac{1}{2}\cbraces{\mumin - \frac{2\Lmax^2}{\muav}}\norm{y - x_i}^2}, \\
		\psi_{\delta}(z, y, \bx) &= \frac{1}{m}\sum_{i=1}^m \sbraces{\angles{\nabla f_i(x_i), z - y} + g(z) - g_i(x_i)}.
	\end{align*}
	Then $(f_{\delta}(y, \bx), \psi_{\delta}(z, y, \bx))$ is a $(\delta, 2\Lav, \muav/2)$-model of $f$ at point $\ol x$, i.e.
	\begin{align*}
		\frac{\muav}{4}\norm{z - y}^2 \le f(z) - f_{\delta}(y, \bx) - \psi_\delta(z, y, \bx) \leq \Lav\norm{z - y}^2 + \delta.
	\end{align*}
\end{lemma}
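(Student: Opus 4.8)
The plan is to unfold the residual $f(z) - f_\delta(y,\bx) - \psi_\delta(z,y,\bx)$ into a sum of per-node Bregman divergences plus a consensus-error term, and then close each of the two inequalities with a single application of Young's inequality. First I would substitute the definitions: the two linear pieces merge via $\angles{\nabla f_i(x_i), y - x_i} + \angles{\nabla f_i(x_i), z - y} = \angles{\nabla f_i(x_i), z - x_i}$, the $g$-contributions combine with $g(z)$, and what remains is
\begin{align*}
	f(z) - f_\delta(y,\bx) - \psi_\delta(z,y,\bx) = \frac1m\sum_{i=1}^m D_i(z) - \frac12\cbraces{\mumin - \frac{2\Lmax^2}{\muav}}S,
\end{align*}
where $D_i(z) := f_i(z) - f_i(x_i) - \angles{\nabla f_i(x_i), z - x_i}\ge 0$ is the Bregman divergence of $f_i$ at $x_i$ and $S := \frac1m\sum_{i=1}^m\norm{x_i - y}_2^2$, so that $\delta = \frac12\cbraces{\frac{\Lmax^2}{\Lav} + \frac{2\Lmax^2}{\muav} + \Lmax - \mumin}S$ by \eqref{eq:eta_inexact_oracle}.

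The second ingredient is the elementary bound: for every $t>0$ and every $i$, starting from $\norm{z-x_i}_2^2 = \norm{z-y}_2^2 + 2\angles{z-y, y-x_i} + \norm{y-x_i}_2^2$ and $2\left|\angles{z-y,y-x_i}\right|\le t\norm{z-y}_2^2 + t^{-1}\norm{y-x_i}_2^2$,
\begin{align*}
	(1-t)\norm{z-y}_2^2 + \cbraces{1-\tfrac1t}\norm{y-x_i}_2^2 \le \norm{z-x_i}_2^2 \le (1+t)\norm{z-y}_2^2 + \cbraces{1+\tfrac1t}\norm{y-x_i}_2^2.
\end{align*}
For the upper bound I would invoke $L_i$-smoothness, $D_i(z)\le\frac{L_i}{2}\norm{z-x_i}_2^2$, together with the right inequality with $t=1$, obtaining $\frac1m\sum_i D_i(z) \le \Lav\norm{z-y}_2^2 + \frac1m\sum_i L_i\norm{y-x_i}_2^2 \le \Lav\norm{z-y}_2^2 + \Lmax S$. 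Combining with the displayed identity, the coefficient of $S$ is $\Lmax - \frac{\mumin}{2} + \frac{\Lmax^2}{\muav}$, and the needed inequality $\Lmax - \frac{\mumin}{2} + \frac{\Lmax^2}{\muav} \le \delta/S$ simplifies, after cancelling the $\frac{\Lmax^2}{\muav}$ and $\mumin$ terms, to $\frac{\Lmax}{2}\cbraces{1 - \frac{\Lmax}{\Lav}}\le 0$, which holds since $\Lmax\ge\Lav$.

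For the lower bound I would instead use $\mu_i$-strong convexity, $D_i(z)\ge\frac{\mu_i}{2}\norm{z-x_i}_2^2$, together with the left inequality with $t=\frac12$, giving $\frac1m\sum_i D_i(z)\ge\frac{\muav}{4}\norm{z-y}_2^2 - \frac1m\sum_i\frac{\mu_i}{2}\norm{y-x_i}_2^2 \ge \frac{\muav}{4}\norm{z-y}_2^2 - \frac{\Lmax}{2}S$, where the last step uses $\mu_i\le L_i\le\Lmax$. Subtracting the consensus term from the identity leaves a residual of at least $\frac{\muav}{4}\norm{z-y}_2^2 + \cbraces{\frac{\Lmax^2}{\muav} - \frac{\Lmax}{2} - \frac{\mumin}{2}}S$, and its $S$-coefficient is nonnegative because $\frac{\Lmax^2}{\muav}\ge\Lmax$ (as $\muav\le\Lmax$) and $\Lmax\ge\frac{\Lmax+\mumin}{2}$ (as $\mumin\le\Lmax$); this yields the left-hand inequality.

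The only delicate point is the choice of the two Young parameters. On the upper side $t=1$ is what turns the weighted average with weights $L_i$ into exactly $\Lav$ (the local factor $\tfrac{1+t}{2}$ must equal $1$) while keeping the leftover $S$-coefficient small enough to be swallowed by $\delta$; on the lower side $t=\tfrac12$ is what produces precisely the $\muav/4$ factor. Once these are fixed, the particular form of $\eta$ in \eqref{eq:eta_inexact_oracle} is exactly the amount of slack needed so that the residual consensus-error term is absorbed into $\delta$ on one side and stays nonnegative on the other; everything else is substitution and the textbook Bregman-divergence estimates.
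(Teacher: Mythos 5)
Your proof is correct, but it follows a genuinely different route from the paper's. The paper argues in the stacked space: it bounds $F(\bz)+G(\bz)$ from both sides by expanding through the intermediate consensus point $\by=\one\otimes y$, exploiting that $F$ is $\mumin$-strongly convex and $\Lmax$-smooth on all of $\R^{md}$ but $\muav$-strongly convex and $\Lav$-smooth when restricted to the consensus set $\cL$, and then trades $\nabla F(\by)$ for $\nabla F(\bx)$ via Young's inequality on the cross term $\angles{\mP(\nabla F(\by)-\nabla F(\bx)),\bz-\by}$ combined with $\norm{\nabla F(\by)-\nabla F(\bx)}_2\le\Lmax\norm{\by-\bx}_2$; that is exactly where the $\Lmax^2/\muav$ and $\Lmax^2/\Lav$ contributions to $\eta$ originate. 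You never touch gradients at $y$: you reduce the residual to the per-node Bregman divergences $D_i(z)$ at $x_i$, bound them by $\frac{\mu_i}{2}\norm{z-x_i}_2^2$ and $\frac{L_i}{2}\norm{z-x_i}_2^2$, and split $\norm{z-x_i}_2^2$ against $\norm{z-y}_2^2$ and $\norm{y-x_i}_2^2$ with a Young parameter ($t=1$ for the upper bound, $t=\frac12$ for the lower one), so that averaging over $i$ produces $\Lav$ and $\muav/4$ directly. Your closing verifications are correct: the upper bound reduces to $\Lav\le\Lmax$ and the lower-bound coefficient $\frac{\Lmax^2}{\muav}-\frac{\Lmax}{2}-\frac{\mumin}{2}$ is nonnegative since $\muav\le\Lmax$ and $\mumin\le\Lmax$. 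What your approach buys is elementarity (no consensus-restricted constants, no projector $\mP$, no Lipschitz-gradient step) and it even exhibits slack in the paper's choice of $\eta$; what the paper's approach buys is that it stays in the vectorized formalism used throughout the rest of the analysis. One bookkeeping caveat: your opening identity tacitly cancels all $g$-terms, whereas with the lemma's literal definitions the residual retains an extra $\frac{1}{m}\sum_{i} g(x_i)$ (the stated $f_\delta$ omits the $g(x_i)$ terms that the paper's own proof carries inside the bracket $F(\bx)+G(\bx)+\dots$); this mismatch between statement and proof is inherited from the paper, not introduced by you, but it should be flagged explicitly when writing the argument out.
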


\begin{proof}
	Denote $\by = \one\otimes y,~ \bz = \one\otimes z$. It is convenient to use the notation of $F(\bx), G(\bx)$. First, write the following lower bound.
	\begin{align*}
		F(\bz) + G(\bz)
		&\overset{\circledOne}{\geq} F(\bx) + \sbraces{\angles{\nabla F(\bx), \by - \bx} + \frac{\mumin}{2}\norm{\bx - \by}^2} \\
		&\qquad+ \sbraces{\angles{\mP\nabla F(\by), \bz - \by} + \frac{\muav}{2}\norm{\bz - \by}^2} + G(\bz) \\
		&= \sbraces{F(\bx) + \angles{\nabla F(\bx), \by - \bx} + \frac{\mumin}{2}\norm{\bx - \by}^2} + \angles{\mP\nabla F(\bx), \bz - \by} \\
		&\qquad + \angles{\mP(\nabla F(\by) - \nabla F(\bx)), \bz - \by} + \frac{\muav}{2}\norm{\by - \bz}^2 + G(\bz) \\
		&\overset{\circledTwo}{\geq} \sbraces{F(\bx) + \angles{\nabla F(\bx), \by - \bx} + \frac{\mumin}{2}\norm{\bx - \by}_2^2} + \angles{\mP\nabla F(\bx), \bz - \by} \\
		&\qquad- \frac{1}{\muav}\norm{\mP(\nabla F(\by) - \nabla F(\bx))}_2^2 - \frac{\muav}{4}\norm{\bz - \by}_2^2 + \frac{\muav}{2}\norm{\bz - \by}_2^2 + G(\bz) \\
		&\overset{\circledThree}{\geq} \sbraces{F(\bx) + G(\bx) + \angles{\nabla F(\bx), \by - \bx} + \cbraces{\frac{\mumin}{2} - \frac{\Lmax^2}{\muav}}\norm{\bx - \by}_2^2} \\
		&\qquad+ \sbraces{\angles{\mP\nabla F(\bx), \bz - \by} + G(\bz) - G(\bx)} + \frac{\muav}{4}\norm{\bz - \by}_2^2,
		\numberthis\label{eq:inexact_oracle_lower_bound_1}
	\end{align*}
	where $\circledOne$ holds since $F$ is $\mumin$-strongly convex over $R^{md}$ and $\muav$-strongly convex over $\cL$, $\circledTwo$ holds by Young inequality and $\circledThree$ holds by $\Lmax$-smoothness of $F$.
	
	Second, we get an upper estimate on $F(\bz) + G(\bz)$.
	\begin{align*}
		F(\bz) + G(\bz)
		&\overset{\circledOne}{\leq} F(\bx) + \sbraces{\angles{\nabla F(\bx), \by - \bx} + \frac{\Lmax}{2}\norm{\by - \bx}_2^2} \\
		&\qquad + \sbraces{\angles{\mP\nabla F(\by), \bz - \by} + \frac{\Lav}{2}\norm{\bz - \by}_2^2} + G(\bz) \\
		&=\sbraces{F(\bx) + \angles{\nabla F(\bx), \by - \bx} + \frac{\Lmax}{2}\norm{\by - \bx}_2^2} \\
		&\qquad+ \angles{\mP\nabla F(\bx), \bz - \by} + \frac{\Lav}{2}\norm{\bz - \by}_2^2 \\
		&\qquad+ \angles{\mP(\nabla F(\by) - \nabla F(\bx)), \bz - \by} + G(\bz) \\
		&\overset{\circledTwo}{\leq}\sbraces{F(\bx) + \angles{\nabla F(\bx), \by - \bx} + \frac{\Lmax}{2}\norm{\by- \bx}_2^2} \\
		&\qquad+ \angles{\mP\nabla F(\bx), \bz - \by} + \frac{\Lav}{2}\norm{\bz - \by}_2^2 \\
		&\qquad+ \frac{1}{2\Lav}\norm{\mP(\nabla F(\by) - \nabla F(\bx))}_2^2 + \frac{\Lav}{2}\norm{\bz - \by}_2^2 + G(\bz) \\
		&\overset{\circledThree}{\leq} \sbraces{F(\bx) + G(\bx) + \angles{\nabla F(\bx), \by - \bx} + \cbraces{\frac{\mumin}{2} - \frac{\Lmax^2}{\muav}}\norm{\by - \bx}_2^2} \\
		&\qquad+ \sbraces{\angles{\mP\nabla F(\bx), \bz - \by} + G(\bz) - G(\bx)} + \Lav\norm{\bz - \by}_2^2 \\
		&\qquad + \cbraces{\frac{\Lmax^2}{2\Lav} + \frac{\Lmax^2}{\muav} - \frac{\mumin}{2} + \frac{\Lmax}{2}}\norm{\by - \bx}_2^2,
		\numberthis\label{eq:inexact_oracle_lower_bound_2}
	\end{align*}
	where $\circledOne$ holds since $F$ is $\Lmax$-smooth over $\R^{md}$ and $\Lav$-smooth over $\cL$, $\circledTwo$ holds by Young inequality and $\circledThree$ holds by $\Lmax$-smoothness of $F$.
	
	It remains to recall the definitions of $F(\bx), G(\bx)$ from \eqref{eq:def_F_G} and combine \eqref{eq:inexact_oracle_lower_bound_1} and \eqref{eq:inexact_oracle_lower_bound_2} to get the desired inequality.
\end{proof}

\section{Accelerated Algorithm and Convergence}

\begin{algorithm}[H]
	\caption{Accelerated decentralized proximal method with consensus subroutine}
	\label{alg:decentralized_agd_prox}
	\begin{algorithmic}[1]
		\REQUIRE{Initial guess $\bx^0\in \cL$, constants $L, \mu > 0$, $\bu^0 = \bx^0$, $\alpha^0 = A^0 = 0$}
		\FOR{$k = 0, 1, 2,\ldots$}
		\STATE{Find $\alpha^{k+1}$ as the greater root of \\$(A^k + \alpha^{k+1})(1 + A^k \muav/2) = 2\Lav(\alpha^{k+1})^2$}
		\STATE{$A^{k+1} = A^k + \alpha^{k+1}$}
		\vspace{0.1cm}
		\STATE{$\ds \by^{k+1} = \frac{\alpha^{k+1} \bu^k + A^k \bx^k}{A^{k+1}}$}
		\vspace{0.1cm}
		\STATE{\label{alg_step:agd_step_prox}$\ds \bv^{k+1} = \frac{\alpha^{k+1}(\muav/2)\by^{k+1} + (1 + A^k\muav/2)\bu^k}{1 + A^{k+1}\muav/2} - \frac{\alpha^{k+1}\nabla F(\by^{k+1})}{1 + A^{k+1}\muav/2}$}
		\vspace{0.1cm}
		\STATE{\label{alg_step:consensus_update_prox}
			$
			\bu^{k+1} = \prox_G^{\gamma_k}\cbraces{\text{Consensus}(\bv^{k+1}, T)}
			$
		}
		\vspace{0.1cm}
		\STATE{$\ds \bx^{k+1} = \frac{\alpha^{k+1} \bu^{k+1} + A^k \bx^k}{A^{k+1}}$}
		\ENDFOR
	\end{algorithmic}
\end{algorithm}
Define $\gamma^k = \frac{\alpha_{k+1}}{1 + A_{k+1}\muav/2},~ \hat\bx^0 = \bx^0,~ \hat\by = \by^0,~ \hat\bu^0 = \bu^0$ and consider a method which trajectory lies in $\cL$:
\begin{align*}
	\hat\by^{k+1} &= \frac{\alpha_{k+1}\hat\bu^k + A_k\hat\bx^k}{A_{k+1}} \\
	\hat\bu^{k+1} &= \prox_G^{\gamma_k} \sbraces{\mu\gamma_k\hat\by^{k+1} + (1 - \mu\gamma_k)\hat\bu^k - \gamma_k\mP\nabla F(\by^{k+1})} \\
	\hat\bx^{k+1} &= \frac{\alpha^{k+1}\hat\bu^{k+1} + A_k\hat\bx^k}{A_{k+1}}.
\end{align*}
Introduce $\mW_\tau^k = \mW^k\ldots\mW^{k-\tau+1}$ for $k\geq\tau - 1$. We have 
\begin{align*}
	\norm{\mW_\tau^k - \mP}_2
	&= \norm{(\mW^k - \mP)\ldots(\mW^{k-\tau+1} - \mP)}_2 \\
	&\leq \norm{\mW^k - \mP}_2 \ldots \norm{\mW^{k-\tau+1} - \mP}_2
	\leq (1 - \chi^{-1})^T.
\end{align*}
Introduce
\begin{align}
	\lambda = (1 - \chi^{-1})^T.
\end{align}

\begin{lemma}\label{lem:beta_recurrence}
	For $k\geq 0$ define $\beta_k = \max\cbraces{\norm{\by_k - \hat\by_k}_2, \norm{\bu_k - \hat\bu_k}_2, \norm{\bx_k - \hat\bx_k}_2}$. We have
	\begin{align*}
		\beta_{k+1}\leq (1 + \lambda)\beta_k + \lambda\gamma_k\norm{\nabla F(\by^{k+1})}.
	\end{align*}
\end{lemma}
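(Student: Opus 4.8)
The plan is to bound each of the three quantities $\norm{\by^{k+1}-\hat\by^{k+1}}$, $\norm{\bu^{k+1}-\hat\bu^{k+1}}$, $\norm{\bx^{k+1}-\hat\bx^{k+1}}$ separately by the claimed right-hand side, and then take the maximum. The first and third are easy: since $\by^{k+1}$ and $\hat\by^{k+1}$ (resp.\ $\bx^{k+1}$ and $\hat\bx^{k+1}$) are the \emph{same} convex combination $\tfrac{\alpha_{k+1}(\cdot) + A_k(\cdot)}{A_{k+1}}$ of $(\bu^k,\bx^k)$ and $(\hat\bu^k,\hat\bx^k)$ respectively (for $\bx^{k+1}$ using $\bu^{k+1}$ vs.\ $\hat\bu^{k+1}$), and the coefficients $\tfrac{\alpha_{k+1}}{A_{k+1}}, \tfrac{A_k}{A_{k+1}}$ are nonnegative and sum to one, the triangle inequality gives $\norm{\by^{k+1}-\hat\by^{k+1}}\le \max\{\norm{\bu^k-\hat\bu^k}, \norm{\bx^k-\hat\bx^k}\}\le\beta_k$, which is even stronger than needed. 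For the third, $\norm{\bx^{k+1}-\hat\bx^{k+1}}\le \tfrac{\alpha_{k+1}}{A_{k+1}}\norm{\bu^{k+1}-\hat\bu^{k+1}} + \tfrac{A_k}{A_{k+1}}\norm{\bx^k-\hat\bx^k}$, so once the $\bu$-bound is in hand this is at most the $\bu^{k+1}$-bound combined with $\beta_k$; since $\tfrac{\alpha_{k+1}}{A_{k+1}}\le 1$ this will again be dominated by the desired expression.

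The real work is the $\bu$-update. Write $\bw^{k+1}=\mathrm{Consensus}(\bv^{k+1},T)=\mW_T^k\bv^{k+1}$ and $\hat\bw^{k+1}=\mu\gamma_k\hat\by^{k+1}+(1-\mu\gamma_k)\hat\bu^k-\gamma_k\mP\nabla F(\by^{k+1})$. Since $\prox_G^{\gamma_k}$ is nonexpansive (it is a proximal operator of a proper closed convex function over a convex set), $\norm{\bu^{k+1}-\hat\bu^{k+1}}\le\norm{\bw^{k+1}-\hat\bw^{k+1}}$. Now I would introduce the intermediate point $\bar\bw^{k+1}=\mP\bv^{k+1}$, which equals $\mu\gamma_k\,\mP\by^{k+1}+(1-\mu\gamma_k)\,\mP\bu^k-\gamma_k\mP\nabla F(\by^{k+1})$ by linearity of $\mP$ and the definition of $\bv^{k+1}$ in step~5 (noting $\mu=\muav$ and $\gamma_k=\tfrac{\alpha_{k+1}}{1+A_{k+1}\muav/2}$ match the coefficients there). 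Since $\hat\by^{k+1},\hat\bu^k\in\cL$ we have $\mP\hat\by^{k+1}=\hat\by^{k+1}$, $\mP\hat\bu^k=\hat\bu^k$; also $\by^{k+1}\in\cL$ in the actual algorithm? No — $\by^{k+1}$ need not lie in $\cL$, but $\mP\by^{k+1}$ and $\mP\bu^k$ are the relevant projections. Split: $\norm{\bw^{k+1}-\hat\bw^{k+1}}\le\norm{\bw^{k+1}-\bar\bw^{k+1}} + \norm{\bar\bw^{k+1}-\hat\bw^{k+1}}$. The second term is a convex combination (coefficients $\mu\gamma_k,1-\mu\gamma_k$, valid when $\mu\gamma_k\in[0,1]$) of $\norm{\mP\by^{k+1}-\hat\by^{k+1}}$ and $\norm{\mP\bu^k-\hat\bu^k}$; bounding $\norm{\mP\by^{k+1}-\hat\by^{k+1}}\le\norm{\by^{k+1}-\hat\by^{k+1}}\le\beta_k$ (projection nonexpansive) and likewise for $\bu$, this term is $\le\beta_k$.

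For the first term, $\bw^{k+1}-\bar\bw^{k+1}=(\mW_T^k-\mP)\bv^{k+1}$, and using $\norm{\mW_T^k-\mP}_2\le\lambda$ (established just before the lemma) we get $\norm{\bw^{k+1}-\bar\bw^{k+1}}\le\lambda\norm{\bv^{k+1}}$. Then I must bound $\norm{\bv^{k+1}}$; from step~5, $\bv^{k+1}=\mu\gamma_k\by^{k+1}+(1-\mu\gamma_k)\bu^k-\gamma_k\nabla F(\by^{k+1})$, so $\norm{\bv^{k+1}}\le\mu\gamma_k\norm{\by^{k+1}}+(1-\mu\gamma_k)\norm{\bu^k}+\gamma_k\norm{\nabla F(\by^{k+1})}$ — but this involves $\norm{\by^{k+1}},\norm{\bu^k}$, not differences. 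To match the lemma's clean form, I instead subtract a point in $\cL$ before applying $(\mW_T^k-\mP)$: since $(\mW_T^k-\mP)\bz=0$ for any $\bz\in\cL$, we have $(\mW_T^k-\mP)\bv^{k+1}=(\mW_T^k-\mP)(\bv^{k+1}-\hat\bw^{k+1})$ (as $\hat\bw^{k+1}\in\cL$: it is a combination of $\hat\by^{k+1},\hat\bu^k\in\cL$ and $\mP\nabla F(\by^{k+1})\in\cL$). Hence $\norm{\bw^{k+1}-\bar\bw^{k+1}}\le\lambda\norm{\bv^{k+1}-\hat\bw^{k+1}}$, and now $\bv^{k+1}-\hat\bw^{k+1}=\mu\gamma_k(\by^{k+1}-\hat\by^{k+1})+(1-\mu\gamma_k)(\bu^k-\hat\bu^k)-\gamma_k(\nabla F(\by^{k+1})-\mP\nabla F(\by^{k+1}))$. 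The first two pieces contribute $\le\mu\gamma_k\beta_k+(1-\mu\gamma_k)\beta_k=\beta_k$, and the last is $\le\gamma_k\norm{(\mI-\mP)\nabla F(\by^{k+1})}\le\gamma_k\norm{\nabla F(\by^{k+1})}$. Collecting: $\norm{\bu^{k+1}-\hat\bu^{k+1}}\le\norm{\bw^{k+1}-\hat\bw^{k+1}}\le\lambda(\beta_k+\gamma_k\norm{\nabla F(\by^{k+1})})+\beta_k=(1+\lambda)\beta_k+\lambda\gamma_k\norm{\nabla F(\by^{k+1})}$. Feeding this into the $\bx$-bound gives $\norm{\bx^{k+1}-\hat\bx^{k+1}}\le\tfrac{\alpha_{k+1}}{A_{k+1}}\big((1+\lambda)\beta_k+\lambda\gamma_k\norm{\nabla F(\by^{k+1})}\big)+\tfrac{A_k}{A_{k+1}}\beta_k\le(1+\lambda)\beta_k+\lambda\gamma_k\norm{\nabla F(\by^{k+1})}$, and taking the maximum over the three terms finishes the proof.

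I expect the main obstacle to be the bookkeeping around which vectors lie in $\cL$ and justifying that one may insert/remove a consensus-space vector inside $(\mW_T^k-\mP)(\cdot)$ — this is what converts a bound in terms of iterate norms into the desired bound in terms of the differences $\beta_k$; a secondary point is verifying $\mu\gamma_k\in[0,1]$ and $\alpha_{k+1}/A_{k+1}\le 1$ so that all the convex-combination arguments are legitimate, which follows from $A_{k+1}=A_k+\alpha_{k+1}$ and the defining quadratic for $\alpha_{k+1}$.
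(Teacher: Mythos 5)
Your proposal is correct and follows essentially the same route as the paper: nonexpansiveness of $\prox_G^{\gamma_k}$, the spectral bound $\norm{\mW_T^k-\mP}_2\le\lambda$, and the fact that the hatted trajectory lies in $\cL$ so that the consensus error acts only on the deviation from consensus. The only difference is bookkeeping — you split once at the level of $\bv^{k+1}$ via $\mP\bv^{k+1}$, while the paper applies the same $(\mW-\mP)$/$\mP$ decomposition term by term to the $\by$, $\bu$ and $\nabla F$ pieces — and both yield the identical bound $(1+\lambda)\beta_k+\lambda\gamma_k\norm{\nabla F(\by^{k+1})}$.
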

\begin{proof}
	First, we have that 
	\begin{align*}
		\norm{\by^{k+1} - \hat\by^{k+1}}_2\leq \frac{\alpha_{k+1}}{A_{k+1}}\norm{\bu^k - \hat\bu^k} + \frac{A_k}{A_{k+1}}\norm{\bx^k - \hat\bx^k}_2\leq \beta_k.
	\end{align*}
	In particular, 
	\begin{align*}
		\norm{\by^{k+1} - \hat\by^{k+1}}_2\leq (1 + \lambda)\beta_k+ \lambda\gamma_k\norm{\nabla F(\by^{k+1})}_2.
	\end{align*}
	After that, consider an update rule for $\bu^{k+1}$. We have
	\begin{align*}
		\|\bu^{k+1} &- \hat\bu^{k+1}\|_2 \\
		&\overset{\circledOne}{\leq} \Big\|\prox_G^{\gamma_k}\sbraces{\mW_\tau^{(k+1)\tau-1}\cbraces{\mu\gamma_k\by^{k+1} + (1 - \mu\gamma_k)\bu^k - \gamma_k\nabla F(\by^{k+1})}} - \\
		&\qquad\prox_G^{\gamma_k} \sbraces{\mu\gamma_k\hat\by^{k+1} + (1 - \mu\gamma_k)\hat\bu^k - \gamma_k\mP\nabla F(\by^{k+1})}\Big\|_2 \\
		&\leq \|\mu\gamma_k(\mW_\tau^{(k+1)\tau-1}\by^{k+1} - \hat\by^{k+1}) + (1 - \mu\gamma_k)(\mW_\tau^{(k+1)\tau-1}\bu^k - \hat\bu^k) - \\ &\qquad-\gamma_k(\mW_\tau^{(k+1)\tau-1}\nabla F(\by^{k+1}) - \mP\nabla F(\by^{k+1}))\|_2 \\
		&\leq \mu\gamma_k\norm{(\mW_\tau^{(k+1)\tau-1} - \mP)(\by^{k+1} - \mP\by^{k+1}) + (\mP\by^{k+1} - \mP\hat\by^{k+1})}_2 \\
		&\qquad+ (1 - \mu\gamma_k)\norm{(\mW_\tau^{(k+1)\tau-1} - \mP)(\bu^k - \mP\bu^k) + (\mP\bu^k - \mP\hat\bu^k)}_2 \\
		&\qquad+ \gamma_k\norm{(\mW_\tau^{(k+1)\tau-1} - \mP)\nabla F(\by^{k+1})}_2 \\
		&\leq \mu\gamma_k(\lambda\beta_k+ \beta_k) + (1 - \mu\gamma_k)(\lambda\beta_k + \beta_k) + \gamma_k\lambda\norm{\nabla F(\by^{k+1})}_2 \\
		&= (1 + \lambda)\beta_k+ \lambda\gamma_k\norm{\nabla F(\by^{k+1})}_2,
	\end{align*}
	where $\circledOne$ holds by non-expansiveness property of prox-operator.
	Finally, for $\bx^{k+1}$ we obtain
	\begin{align*}
		\norm{\bx^{k+1} - \hat\bx^{k+1}}_2
		&\leq \frac{\alpha_{k+1}}{A_{k+1}}\norm{\bu^{k+1} - \hat\bu^{k+1}} + \frac{A_k}{A_{k+1}}\norm{\bx^k - \hat\bx^k}_2\leq \beta_k \\
		&\leq (1 + \lambda)\beta_k+ \lambda\gamma_k\norm{\nabla F(\by^{k+1})}_2.
	\end{align*}
	As a result, we have
	\begin{align*}
		\beta_{k+1} &= \max\cbraces{\norm{\by^{k+1} - \hat\by^{k+1}}_2, \norm{\bu^{k+1} - \hat\bu^{k+1}}_2, \norm{\bx^{k+1} - \hat\bx^{k+1}}_2} \\
		&\leq (1 + \lambda)\beta_k+ \lambda\gamma_k\norm{\nabla F(\by^{k+1})}_2.
	\end{align*}
\end{proof}

We recall a result from \cite{stonyakin2020inexact} revisited in terms of \cite{rogozin2020towards}.
\begin{lemma}\label{lem:coef_bound}
	The following relations hold.
	\begin{align*}
		A^N&\geq \frac{1}{2\Lav}\cbraces{1 + \frac{1}{4}\sqrt{\frac{\muav}{\Lav}}}^{2(N-1)}, \\
		\frac{\sum_{k=0}^{N-1} A^{k+1}}{A^N}&\leq 1 + 2\sqrt{\frac{\Lav}{\muav}}.
	\end{align*}
\end{lemma}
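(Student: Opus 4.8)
\emph{Proof plan.} The plan is to distill from the defining recurrence $(A^k + \alpha^{k+1})(1 + A^k\muav/2) = 2\Lav(\alpha^{k+1})^2$, which we rewrite as
\[
	2\Lav(\alpha^{k+1})^2 = A^{k+1}\Big(1 + \tfrac{\muav}{2}A^k\Big), \qquad \alpha^{k+1} = A^{k+1} - A^k,
\]
a clean geometric lower bound on the ratio $A^{k+1}/A^k$, and then read off both stated inequalities by elementary bookkeeping. Evaluating the identity at $k = 0$ (where $A^0 = \alpha^0 = 0$) gives $\alpha^1 = 2\Lav(\alpha^1)^2$, hence $A^1 = \alpha^1 = \tfrac{1}{2\Lav}$, which serves as the base of the telescoping; also $A^0 \le A^1 \le A^2 \le \cdots$ since every $\alpha^{k+1} \ge 0$.

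For the first inequality, fix $k \ge 1$, divide the displayed identity by $2\Lav A^k A^{k+1}$, and take square roots:
\[
	\frac{A^{k+1} - A^k}{\sqrt{A^k A^{k+1}}} \;=\; \sqrt{\frac{1}{2\Lav A^k} + \frac{\muav}{4\Lav}} \;\ge\; \frac12\sqrt{\frac{\muav}{\Lav}} .
\]
Writing $t_k := \sqrt{A^{k+1}/A^k} > 1$, the left-hand side is exactly $t_k - t_k^{-1}$, so $t_k^2 - \tfrac12\sqrt{\muav/\Lav}\,t_k - 1 \ge 0$; taking the larger root and dropping the nonnegative term $\tfrac14\muav/\Lav$ under the square root yields $t_k \ge 1 + \tfrac14\sqrt{\muav/\Lav}$. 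Hence $A^{k+1} \ge \big(1 + \tfrac14\sqrt{\muav/\Lav}\big)^2 A^k$ for every $k \ge 1$, and multiplying these bounds down to $A^1 = \tfrac{1}{2\Lav}$ gives the claimed estimate for $A^N$. The one point that needs attention is the normalization: dividing by $\sqrt{A^k A^{k+1}}$ — rather than by $A^k$ or $A^{k+1}$ alone — is what makes the constant $\tfrac14$ come out exactly, whereas the cruder normalizations leak a spurious factor $\sqrt{2}$ and fail to give the stated bound.

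For the second inequality I would instead retain only the strong-convexity part: from $2\Lav(\alpha^{k+1})^2 = A^{k+1}(1 + \tfrac{\muav}{2}A^k) \ge \tfrac{\muav}{2}A^k A^{k+1} \ge \tfrac{\muav}{2}(A^k)^2$ we get $A^{k+1} - A^k = \alpha^{k+1} \ge \tfrac12\sqrt{\muav/\Lav}\,A^k$, i.e. $A^k \le 2\sqrt{\Lav/\muav}\,(A^{k+1} - A^k)$. Summing over $k = 1, \dots, N-1$ telescopes to $\sum_{k=1}^{N-1} A^k \le 2\sqrt{\Lav/\muav}\,(A^N - A^1) \le 2\sqrt{\Lav/\muav}\,A^N$, and adding the remaining term $A^N$ gives $\sum_{k=0}^{N-1} A^{k+1} = A^N + \sum_{k=1}^{N-1} A^k \le \big(1 + 2\sqrt{\Lav/\muav}\big)A^N$, which is the claim after dividing by $A^N$. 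I expect no real obstacle here beyond correctly shifting the summation index; the substantive work is the ratio bound of the previous paragraph.
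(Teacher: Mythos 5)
Your proof is correct. Note, however, that the paper does not prove this lemma at all: it simply imports it as a known result from \cite{stonyakin2020inexact} (revisited as in \cite{rogozin2020towards}), where the corresponding bounds are established for the generic accelerated method with a $(\delta,L,\mu)$-model, here instantiated with $L = 2\Lav$ and $\mu = \muav/2$. So your argument is a genuinely self-contained alternative: you work directly with the recurrence $A^{k+1}\bigl(1 + \tfrac{\muav}{2}A^k\bigr) = 2\Lav(\alpha^{k+1})^2$, and the two key steps both check out — the normalization by $\sqrt{A^k A^{k+1}}$ turns the identity into $t_k - t_k^{-1} \geq \tfrac12\sqrt{\muav/\Lav}$ with $t_k = \sqrt{A^{k+1}/A^k}$, whose larger root gives the exact per-step ratio $\bigl(1 + \tfrac14\sqrt{\muav/\Lav}\bigr)^2$ needed to telescope down to $A^1 = \tfrac{1}{2\Lav}$; and retaining only the term $\tfrac{\muav}{2}A^kA^{k+1} \geq \tfrac{\muav}{2}(A^k)^2$ gives $A^k \leq 2\sqrt{\Lav/\muav}\,(A^{k+1}-A^k)$, which telescopes to the second bound after the index shift $\sum_{k=0}^{N-1}A^{k+1} = A^N + \sum_{k=1}^{N-1}A^k$. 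What your route buys is that the paper would no longer need to defer to the external references for this lemma (and your derivation makes transparent where the constants $\tfrac14$ and $2$ come from); what the citation route buys is brevity and consistency with the inexact-model framework the convergence theorem already borrows from \cite{stonyakin2020inexact}. The only cosmetic caveats: state explicitly that $N \geq 1$ and that $A^k > 0$ for $k \geq 1$ (so the divisions are legitimate), both of which follow from $A^1 = \tfrac{1}{2\Lav}$ and monotonicity, and keep the paper's superscript notation $A^k$, $\alpha^{k+1}$ throughout.
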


\begin{theorem}
	Let Assumptions~\ref{assum:smoothness} and \ref{assum:strong_convexity} hold. Then Algorithm~\ref{alg:decentralized_agd_prox} requires
	\begin{align*}
		N_{comp} = O\cbraces{\sqrt{\frac{\Lav}{\muav}}\log\cbraces{\frac{1}{\eps}}}
	\end{align*}
	oracle calls per node and
	\begin{align*}
		N_{comm} = O\cbraces{\chi\sqrt{\frac{\Lav}{\muav}}\log\cbraces{\frac{1}{\eps}}}
	\end{align*}
	communication rounds to reach $\eps$-accuracy.
\end{theorem}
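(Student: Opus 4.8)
The plan is to read Algorithm~\ref{alg:decentralized_agd_prox} as an \emph{inexact} accelerated proximal‑gradient method applied to the lifted problem~\eqref{eq:problem_linear_constraints}, with two distinct error sources: (i) the gradient is taken at the non‑consensual point $\by^{k+1}$, which by Lemma~\ref{lem:inexact_oracle} amounts to running the \emph{exact} method against a $(\delta_k,2\Lav,\muav/2)$‑model with $\delta_k=\eta\norm{\by^{k+1}-\mP\by^{k+1}}_2^2$ and $\eta$ as in~\eqref{eq:eta_inexact_oracle}; and (ii) the $T$ consensus rounds are only an approximate averaging, which is precisely the discrepancy $\beta_k$ between the true iterates and the ideal ones $\hat\bx^k,\hat\by^k,\hat\bu^k\in\cL$. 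First I would verify that steps~2--7, with $\gamma_k=\alpha^{k+1}/(1+A^{k+1}\muav/2)$ and strong‑convexity parameter $\muav/2$, implement exactly the $(\delta_k,2\Lav,\muav/2)$‑model step for the ideal sequence; and, since $\hat\by^{k+1}\in\cL$, the first estimate in the proof of Lemma~\ref{lem:beta_recurrence} gives $\norm{\by^{k+1}-\mP\by^{k+1}}_2\le 2\beta_k$, hence $\delta_k\le 4\eta\beta_k^2$.

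Next I would invoke the inexact‑oracle accelerated estimate of \cite{stonyakin2020inexact} (in the form used in \cite{rogozin2020towards}): for a $(\delta_k,2\Lav,\muav/2)$‑model, writing $R_0=\norm{\bx^0-\bx^*}_2$ with $\bx^*$ the solution of~\eqref{eq:problem_linear_constraints} and letting $\hat x^N$ be the common value of the components of $\hat\bx^N$,
\[
	(f+g)(\hat x^N)-(f+g)^*\ \le\ \frac{R_0^2}{2A^N}+\frac{1}{A^N}\sum_{k=0}^{N-1}A^{k+1}\delta_k .
\]
By Lemma~\ref{lem:coef_bound} the sum on the right is at most $\bigl(1+2\sqrt{\Lav/\muav}\,\bigr)\max_{k<N}\delta_k$, and $A^N\ge\tfrac{1}{2\Lav}\bigl(1+\tfrac14\sqrt{\muav/\Lav}\,\bigr)^{2(N-1)}$, so the first term drops below $\eps/2$ once $N=O\bigl(\sqrt{\Lav/\muav}\log(1/\eps)\bigr)$ (absorbing $\log(\Lav R_0^2)$ into the $O(\cdot)$). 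This fixes $N_{comp}=O\bigl(\sqrt{\Lav/\muav}\log(1/\eps)\bigr)$, as each outer iteration performs exactly one evaluation of $\nabla f_i$ per node.

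It remains to choose $T$ so that the error term is $\le\eps/2$, which by the above reduces to keeping $\max_k\beta_k$ of order $\sqrt{\eps/\sqrt{\Lav/\muav}}$ up to $\eta$. From Lemma~\ref{lem:beta_recurrence} and $\beta_0=0$,
\[
	\beta_N\ \le\ \sum_{j=0}^{N-1}(1+\lambda)^{N-1-j}\lambda\gamma_j\norm{\nabla F(\by^{j+1})}_2 ,
\]
where $\gamma_j\le\bar\gamma=O(1/\sqrt{\muav\Lav})$ is read off from the $\alpha$‑recurrence, and $\norm{\nabla F(\by^{j+1})}_2\le C=C\bigl(R_0,\Lav,\norm{\nabla F(\bx^*)}_2\bigr)$, which I would establish by a bootstrap: assume the convergence estimate over the horizon $[0,N]$, deduce that all iterates lie in a fixed ball around $\bx^*$, deduce the gradient bound, and close the loop. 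Picking $T$ with $\lambda N\le1$ gives $(1+\lambda)^N\le e$ and $\beta_N\le e\bar\gamma C\lambda N$; shrinking $\lambda=(1-\chi^{-1})^T$ a little further so that $\eta(e\bar\gamma C\lambda N)^2\bigl(1+2\sqrt{\Lav/\muav}\,\bigr)\le\eps/2$ completes the bound. Each such requirement reads $T=O\bigl(\chi\log(1/\eps)\bigr)$ after absorbing $\log(\Lav/\muav)$‑ and $\log\bar\gamma$‑type factors, so $N_{comm}=N\cdot T=O\bigl(\chi\sqrt{\Lav/\muav}\log(1/\eps)\bigr)$. A final short step transfers the estimate from $\hat x^N$ to the returned per‑node iterates, using $\norm{\bx^N-\hat\bx^N}_2=\beta_N=O(\lambda N)$ together with local Lipschitz continuity of $f+g$ on the ball containing the iterates.

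The main obstacle is the a priori bound $\norm{\nabla F(\by^{k+1})}_2\le C$: the problem has neither a bounded feasible set nor bounded gradients, so this must be produced by the bootstrap above, which couples the $\beta_k$‑recursion with the convergence estimate; making this simultaneous argument rigorous --- while keeping the dependence of $T$ on $\eps$ at a single logarithm --- is the delicate part. A secondary, more mechanical, obstacle is checking that steps~5--7 realize \emph{exactly} the $(\delta_k,2\Lav,\muav/2)$‑model step, so that the inexact‑oracle estimate of \cite{stonyakin2020inexact} applies without modification.
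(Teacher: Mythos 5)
Your outline follows the paper's own route: interpret Algorithm~\ref{alg:decentralized_agd_prox} as an inexact accelerated proximal step in the $(\delta,2\Lav,\muav/2)$-model sense of Lemma~\ref{lem:inexact_oracle}, control the deviation from the ideal consensual trajectory via the recursion of Lemma~\ref{lem:beta_recurrence}, plug the accumulated inexactness into the estimate of \cite{stonyakin2020inexact}, and use Lemma~\ref{lem:coef_bound} to get $N=O(\sqrt{\Lav/\muav}\log(1/\eps))$ and $T=O(\chi\log(1/\eps))$, hence $N_{comm}=NT$. Two differences are worth noting. First, you work with the function-value form of the inexact estimate, while the paper uses the distance form ($\norm{\bu^k-\bx^*}_2^2$ and $\norm{\bx^k-\bx^*}_2^2$); the distance form is not incidental, because it is what lets the paper bound $\norm{\by^{k+1}-\bx^*}_2^2$ at \emph{every} iteration via convexity of $\norm{\cdot}_2^2$, and hence bound the gradient norms that drive the $\beta_k$-recursion. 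Second, and more importantly, the step you flag as the ``delicate part'' --- the a priori bound $\norm{\nabla F(\by^{k+1})}_2\le C$ via a bootstrap over the horizon --- is left unexecuted in your proposal, and as written this is a genuine gap: the circular dependence (gradient bounds need convergence, convergence needs small errors, errors need gradient bounds) is exactly what has to be resolved, and an induction ``assume the convergence estimate, deduce the ball, close the loop'' is not justified without extra work. The paper avoids the bootstrap altogether by making the circularity explicit and solving it algebraically: with $\delta=\eta\sum_{k}\beta_k^2$ the aggregated inexactness, the distance estimates give $\norm{\nabla F(\by^{k+1})}_2\le\sqrt{aR_0^2+b\delta}+\norm{\nabla F(\bx^*)}_2$ with explicit $a,b$; unrolling Lemma~\ref{lem:beta_recurrence} and summing then yields the self-bounding inequality $\delta\le 2\eta\gamma^2 c(N)\cbraces{aR_0^2+b\delta+\norm{\nabla F(\bx^*)}_2^2}$ with $c(N)=N\cbraces{(1+\lambda)^{N-1}-1}^2$, which is solved for $\delta$ once $T$ is chosen large enough that $1/c(N)-2\eta\gamma^2 b>0$, giving $\delta\le\eps/2$ with $T=O(\chi\log(1/\eps))$. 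If you replace your bootstrap by this closed-form resolution (or fully formalize the simultaneous induction, which amounts to the same inequality), your argument matches the paper's and yields the stated complexities.
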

\begin{proof}
	Let $\delta = \eta\sum_{k=0}^{N-1}\beta_k^2$. According to Theorem~3.1 in \cite{stonyakin2020inexact} we have
	\begin{align*}
		\norm{\bu^N - \bx^*}_2^2
		&\leq \frac{\norm{\bu^0 - \bx^*}_2^2}{1 + A_N\muav/2} + \frac{4\sum_{k=0}^{N-1} A_{k+1}\delta}{1 + A_N\muav/2} \\
		&\leq \frac{2\norm{\bu^0 - \bx^*}_2^2}{A_N\muav} + \frac{8\sum_{k=0}^{N-1} A_{k+1}\delta}{A_N\muav}, \\
		\norm{\bx^N - \bx^*}_2^2
		&\leq \frac{2}{\muav/2} \cbraces{\frac{\norm{\bu^0 - \bx^*}_2^2}{2A_N} + \frac{2\sum_{k=0}^{N-1}A_{k+1}\delta}{A_N}} \\
		&= \frac{2\norm{\bu^0 - \bx^*}_2^2}{A_N\muav} + \frac{8\sum_{k=0}^{N-1} A_{k+1}\delta}{A_N\muav}.
	\end{align*}
	First, by Lemma~\ref{lem:coef_bound} we have $A^N\geq \frac{1}{2\Lav}\cbraces{1 + \frac{1}{4}\sqrt{\frac{\muav}{\Lav}}}^{2(N-1)}$. To ensure condition $\norm{\bx^N - \bx^*}_2^2\leq\eps/2$ it is sufficient to set $N = O\cbraces{\sqrt{\Lav/\muav}\log(1/\eps)}$.
	
	Using convexity of $\norm{\cdot}_2^2$ we obtain 
	\begin{align*}
		\norm{\by^{k+1} - \bx^*}_2^2
		&= \norm{\frac{\alpha^{k+1}\bu^k}{A^{k+1}} + \frac{A^k\bx^k}{A^{k+1}} - \bx^*}_2^2 \\
		&\leq \frac{\alpha^{k+1}}{A^{k+1}} \norm{\bu^k - \bx^*}_2^2 + \frac{A^k}{A^{k+1}}\norm{\bx^k - \bx^*}_2^2 \\
		&\overset{\circledOne}{\leq} \frac{2\norm{\bu^0 - \bx^*}_2^2}{A_{k+1}\muav} + \frac{8\sum_{t=0}^{k} A_{t+1}\delta}{A_{k+1}\muav} \\
		&\leq \frac{4\Lav}{\muav} \norm{\bu^0 - \bx^*}_2^2 + \frac{8\delta}{\muav}\cbraces{1 + 2\sqrt{\frac{\Lav}{\muav}}}.
	\end{align*}
	For brevity introduce
	\begin{align*}
		a = \frac{4\Lav^3}{\muav},~ b = \frac{8\Lav^2}{\muav}\cbraces{1 + 2\sqrt{\frac{\Lav}{\muav}}},~ R_0^2 = \norm{\bu^0 - \bx^*}_2^2.
	\end{align*}
	Therefore, $\norm{\nabla F(\by^{k+1})}_2\leq \Lav\sqrt{aR_0^2 + b\delta}$. By Lemma~\ref{lem:beta_recurrence} we have
	\begin{align*}
		\beta_{k+1}
		&\leq (1 + \lambda)\beta_k + \lambda\gamma\norm{\nabla F(\by^{k+1})}_2 \\
		&\leq (1 + \lambda)\beta_k + \lambda\gamma\cbraces{\sqrt{aR_0^2 + b\delta} + \norm{\nabla F(\bx^*)}_2}.
	\end{align*}
	Unfolding the recurrence we obtain
	\begin{align*}
		\beta_{k+1}
		&\leq \lambda\gamma\cbraces{\sqrt{aR_0^2 + b\delta} + \norm{\nabla F(\bx^*)}_2}\sum_{t=0}^{k} (1 + \lambda)^t \\
		&\leq \lambda\gamma\cbraces{\sqrt{aR_0^2 + b\delta} + \norm{\nabla F(\bx^*)}_2} \frac{(1 + \lambda)^{k+1} - 1}{(1 + \lambda) - 1} \\
		&= \gamma\cbraces{\sqrt{aR_0^2 + b\delta} + \norm{\nabla F(\bx^*)}_2} \cbraces{(1 + \lambda)^{k+1} - 1}.
	\end{align*}
	Summing over $k$, we obtain
	\begin{align*}
		\delta &= \eta \sum_{k=0}^{N-1} \beta_k^2
		\leq \eta N\beta_{N-1}^2 \\
		&\leq \eta \gamma^2 \cbraces{\sqrt{aR_0^2 + b\delta} + \norm{\nabla F(\bx^*)}_2}^2\cdot N\cbraces{(1 + \lambda)^{N-1} - 1}^2.
	\end{align*}
	Denote $c(N) = N\cbraces{(1 + \lambda)^{N-1} - 1}^2$.
	\begin{align*}
		\frac{\delta}{c(N)}
		&\leq \eta\gamma^2 \cbraces{\sqrt{aR_0^2 + b\delta} + \norm{\nabla F(\bx^*)}_2}^2 \\
		&\leq 2\eta\gamma^2 \cbraces{aR_0^2 + b\delta+ \norm{\nabla F(\bx^*)}_2^2}, \\
		\delta\cbraces{\frac{1}{c(N)} - 2\eta\gamma^2 b}&\leq 2\gamma^2\cbraces{aR_0^2 + \norm{\nabla F(\bx^*)}_2^2}, \\
		\delta &\leq 2\gamma^2\cbraces{aR_0^2 + \norm{\nabla F(\bx^*)}_2^2}\cbraces{\frac{1}{c(N)} - 2\eta\gamma^2 b}^{-1}, \\
		&= \frac{aR_0^2 + \norm{\nabla F(\by^{k+1})}_2^2}{\eta b} \cbraces{\frac{1}{1 - 2\gamma^2 bc(N)} - 1}.
	\end{align*}
	We would like to choose number of consensus iterations $T$ such that $\delta\leq\eps/2$. Setting
	\begin{align*}
		T&\geq \frac{\chi}{2}\log\sbraces{\frac{32N^3}{\eta\muav\Lav\eps}\cbraces{aR_0^2 + \norm{\nabla F(\bx^*)}_2^2}}\\
		&\geq \frac{\chi}{2}\log\sbraces{\frac{32N^3}{\eta\muav\Lav\eps}\cbraces{\frac{4\Lav^3}{\muav}\norm{\bu^0 - \bx^*}_2^2 + \norm{\nabla F(\bx^*)}_2^2}} \\
		&= O\cbraces{\chi\log\cbraces{\frac{N^3}{\eps}}} = O\cbraces{\chi\log\cbraces{\frac{1}{\eps}} + \chi\log\log\cbraces{\frac{1}{\eps}}}
		= O\cbraces{\chi\log\cbraces{\frac{1}{\eps}}}.
	\end{align*}
	we obtain
	\begin{align*}
		\lambda &= (1 - \chi^{-1})^T = \exp\cbraces{T\log(1 - \chi^{-1})}\leq \exp(-T/\chi) \\
		&\leq \frac{\sqrt{\eta\muav\Lav}}{4N}\sqrt{\frac{\eps}{2N}} \cbraces{\frac{4\Lav^3}{\muav}\norm{\bu^0 - \bx^*}_2^2 + \norm{\nabla F(\bx^*)}_2^2}^{-1/2} \\
		&= \frac{1}{4N}\cbraces{\frac{\eta\eps}{2N\gamma^2\cbraces{aR_0^2 + \norm{\nabla F(\bx^*)}_2^2}}}^{1/2}
	\end{align*}
	For any $0\leq x\leq 1$ it holds $1 - \frac{1}{1 + x}\geq 1 - \cbraces{1 - \frac{x}{2}} = \frac{x}{2}$. Assuming that $\eps$ is sufficiently small, we obtain
	\begin{align*}
		\lambda&\leq \frac{1}{N} \sbraces{1 - \cbraces{1+ \cbraces{\frac{\eta\eps}{8N\gamma^2\cbraces{aR_0^2 + \norm{\nabla F(\bx^*)}_2^2}}}^{1/2}}^{-1}}, \\
		1 - N\lambda&\geq \sbraces{1+ \cbraces{\frac{\eta\eps}{8N\gamma^2(aR_0^2 + \norm{\nabla F(\bx^*)}_2^2)}}^{1/2}}^{-1}, \\
		\frac{1}{1 - N\lambda} - 1&\leq \cbraces{\frac{\eta\eps}{8N\gamma^2(aR_0^2 + \norm{\nabla F(\bx^*)}_2^2)}}^{1/2}.
	\end{align*}
	After that, let us note that 
	\begin{align*}
		(1 + \lambda)^{N-1} - 1
		= \sum_{k=1}^{N-1} \binom{N-1}{k} \lambda^k
		\leq \sum_{k=1}^{N-1} N^k\lambda^k
		\leq \frac{N\lambda}{1 - N\lambda}
		= \frac{1}{1 - N\lambda} - 1.
	\end{align*}
	Consequently,
	\begin{align*}
		c(N) &= N\cbraces{(1 + \lambda)^{N-1} - 1}^2\leq \frac{\eta\eps}{8\gamma^2\cbraces{aR_0^2 + \norm{\nabla F(\bx^*)}_2^2}}, \\
		1 - 2\gamma^2 b c(N)&\geq 1 - \frac{\eta b\eps}{4(aR_0^2 + \norm{\nabla F(\bx^*)})}.
	\end{align*}
	We once again use that $\frac{1}{1 + x}\leq 1 - \frac{x}{2}$ for $0\leq x\leq 1$.
	\begin{align*}
		1 - 2\gamma^2bc(N)&\geq \cbraces{1+ \frac{\eta b\eps}{2(aR_0^2 + \norm{\nabla F(\bx^*)}_2^2)}}^{-1} \\
		\frac{1}{1 - 2\gamma^2bc(N)}&\leq 1 + \frac{\eta b\eps}{2(aR_0^2 + \norm{\nabla F(\bx^*)}_2^2)}
	\end{align*}
	Finally, we get
	\begin{align*}
		\delta\leq \frac{aR_0^2 + \norm{\nabla F(\bx^*)}_2^2}{\eta b}\cbraces{\frac{1}{1 - 2\gamma^2bc(N)} - 1}\leq \frac{\eps}{2}.
	\end{align*}
	Summing up, to reach $\eps$-accuracy Algorithm~\ref{alg:decentralized_agd_prox} requires
	\begin{align*}
		N_{comm} &= NT = O\cbraces{\chi\sqrt{\frac{\Lav}{\muav}}\log\cbraces{\frac{1}{\eps}}}, \\
		N_{comp} &= N = O\cbraces{\sqrt{\frac{\Lav}{\muav}}\log\cbraces{\frac{1}{\eps}}}.
	\end{align*}
\end{proof}

\section{Conclusion}

We propose an consensus subroutine approach that works for composite optimization. The algorithm is an accelerated proximal method that performs several communication rounds after each proximal step. The novelty of the approach is that we do not bound the gradient norm. We apply a novel proof technique and thus get an algorithm that recovers lower communication complexity bounds up to a logarithmic factor.

\bibliographystyle{abbrv}
\bibliography{references}

\end{document}